\newcommand{\Hom}{\operatorname{Hom}\nolimits}
\newcommand{\RHom}{\operatorname{RHom}\nolimits}
\newcommand{\id}{\operatorname{id}\nolimits}
\newtheorem{theo}{Theorem}[section]
\newtheorem{lemma}[theo]{Lemma}
\newcommand{\ten}{\otimes}
\newcommand{\lten}{\overset{\mathbf{L}}{\ten}}
\newcommand{\iso}{\stackrel{_\sim}{\rightarrow}}
\newcommand{\eps}{\varepsilon}
\begin{document}
\baselineskip=15pt
\title[Derived invariance of the cap product]{Derived invariance of the
cap product in Hochschild theory}
\author{Marco A.~Armenta A. and Bernhard Keller}
\address{M.~A.~: Centro de Investigaci\'on en Matem\'aticas A. C., Cubicle D104, 36240 Guanajuato, Gto. M\'exico}
\address{B.~K.~: Universit\'e Paris Diderot -- Paris~7, 
UFR de Math\'ematiques, Institut de
Math\'ematiques de Jussieu--PRG, UMR 7586 du CNRS, Case 7012, B\^atiment
Sophie Germain, 75205 Paris Cedex 13, France}

\email{drmarco@cimat.mx, bernhard.keller@imj-prg.fr}

\keywords{Derived category, Hochschild homology, Hochschild cohomology} \subjclass[2000]{}

\begin{abstract} We prove derived invariance of the cap product for associative 
algebras projective over a commutative ring.

 \end{abstract}

\maketitle


\section{Introduction}
It has been known since Rickard's work \cite{Rickard91} that Hochschild
cohomology is preserved under derived equivalence as a graded algebra
with the cup product. Using the methods of \cite{Rickard91}, one can also
show that Hochschild homology is preserved as a graded space, see
for example \cite{Zimmermann07}. Nevertheless, derived invariance of the cap product -- which provides an action of the Hochschild cohomology algebra on the Hochschild homology -- has not been considered. In this note, we prove that derived invariance holds as well for the cap product.

This paper is part of the Ph.~D.~thesis of the first author, whose advisors are Claude Cibils and Jos\'e Antonio de la Pe\~na, to whom he is very grateful. It enters into the
first author's project of showing the derived invariance of the Tamarkin-Tsygan
calculus associated with a $k$-projective algebra.

\section{Derived invariance}
Let $k$ be a commutative ring and $A$ an associative $k$-algebra, projective
as a $k$-module. We write $A^e$ for the envelopping algebra $A\ten_k A^{op}$.
We denote by $D(A)$ the unbounded derived category of the category of
right $A$-modules. For a bimodule $M$, we denote by $HH^\bullet(A,M)$ the
Hochschild cohomology with coefficients in $M$ and by $HH_\bullet(A,M)$ the
Hochschild homology with coefficients in $M$, see for example
\cite{CartanEilenberg56} or \cite{Weibel94}. We have canonical isomorphisms
\[
HH^n(A,M) \iso H^n(\RHom_{A^e}(A,M)) = \Hom_{D(A^e)}(A,M[n])
\]
and
\[
HH_n(A,M) \iso H_n(A\lten_{A^e} M).
\]
Let $f\in HH^m(A,A)$. The cap product by $f$ is a map
\[
f\cap ?\, : HH_n(A,M) \to HH_{n-m}(A,M).
\]
The following lemma gives an interpretation of the cap product in terms of the
derived category

\begin{lemma} The following square commutes, where the vertical arrows are the
canonical identifications.
\[
\xymatrix{ HH_m(A,M) \ar[d] \ar[rr]^{f\cap ?} &  & HH_{m-n}(A,M) \ar[d] \\
H_0(M\lten_{A^e} A[-m]) \ar[rr]_{H_0(\id\ten f)} & & H_0(M\lten_{A^e} A[n-m]).}
\]
\end{lemma}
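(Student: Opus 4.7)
My plan is to reduce both sides of the square to chain-level computations on a single projective bimodule resolution $P \xrightarrow{\sim} A$ --- the reduced bar resolution $B(A)$ is a convenient choice. With such a resolution, the canonical identifications read $HH_m(A,M) \iso H_m(M \otimes_{A^e} P)$ and $H_0(M \lten_{A^e} A[-m]) \iso H_0((M \otimes_{A^e} P)[-m])$, and the two sides are tautologically equal. Thus each vertical arrow of the square becomes a pure degree relabelling, and the whole question is translated into a chain-level identity between two maps $H_m(M \otimes_{A^e} P) \to H_{m-n}(M \otimes_{A^e} P)$.

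Next I would represent $f$ at the chain level. Because $P$ is a complex of projective $A^e$-modules, the class $f \in \Hom_{D(A^e)}(A,A[n])$ lifts uniquely up to homotopy to a chain map $\tilde f : P \to P[n]$ compatible with the augmentation $P \to A$. Applying $M \otimes_{A^e} -$ turns $\tilde f$ into a chain map $\id_M \otimes \tilde f : M \otimes_{A^e} P \to (M \otimes_{A^e} P)[n]$, and, after the shift by $[-m]$ and passage to $H_0$, this is exactly the bottom arrow of the square under the identifications above.

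It then remains to check that $\id_M \otimes \tilde f$ computes the classical cap product. Taking $P = B(A)$ and a cocycle representative $f : A^{\otimes n} \to A$, the standard explicit lift $\tilde f$ is given by a contraction of the form $a_0 \otimes a_1 \otimes \cdots \otimes a_p \otimes a_{p+1} \mapsto \pm\, a_0 \otimes a_1 \otimes \cdots \otimes a_{p-n} \otimes f(a_{p-n+1}, \ldots, a_p) \otimes a_{p+1}$. Tensoring on the left with $M$ and comparing with the chain-level formula for $f \cap ?$ given, for instance, in \cite{CartanEilenberg56} or \cite{Weibel94}, yields the desired agreement.

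The main obstacle is purely bookkeeping: matching sign conventions and degree shifts (the shift $[-m]$ versus the subscript in $H_m$, which end of the bar tensor string is contracted by $f$, left- versus right-module conventions on $M$) so that the two routes around the square agree with no stray Koszul-type signs. The conceptual content --- that the cap product is realised at the derived level by post-composition with $f$ --- is essentially forced by the definitions once a chain-level lift has been chosen.
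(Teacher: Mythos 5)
Your proposal is correct and follows essentially the same route as the paper: both arguments compute $M\lten_{A^e}A$ via the bar resolution, lift $f$ to a chain endomorphism of the resolution (the paper writes this implicitly as $y\mapsto f(y)$ for $y\in Bar(A)$), and identify $\id_M\otimes\tilde f$ with the classical cap product formula. Your version merely spells out the lifting step and the contraction formula that the paper leaves implicit.
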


\begin{proof} Let $Bar(A)$ be the bar resolution of $A$, we get
\[
\xymatrix{ M \lten_{A^e} A = Tot(M \ten_{A^e} Bar(A)) = M \ten_{A^e} Bar(A).}
\]
Let $x \in M$ and $y\in Bar(A)$, then
\[
\xymatrix{ H_0(\id \ten f)(\left[x \ten y\right]) = \left[ x \otimes f(y) \right] = f \cap [x \ten y].}
\]
\end{proof}

Now suppose that $A$ is derived equivalent to a $k$-projective algebra $B$.
By Rickard's Morita theory for derived categories \cite{Rickard89} \cite{Rickard91},
this implies that there exist bimodule complexes $X\in D(A^{op}\ten_k B)$ and
$X^\vee \in D(B^{op}\ten_k A)$ such that there are isomorphisms
$\eta: A \iso X \lten_B X^\vee$ and $\eps: X^\vee \lten_A X \iso B$ in 
$D(A^e)$ respectively $D(B^e)$. We may and will suppose that these
isomorphisms make the following triangles commutative:
\[
\xymatrix{X \ar[r]^-{\eta\ten X} \ar[rd]_{=} & X \lten_B X^\vee\lten_A X \ar[d]^{X \ten \eps} \\
  & X} \quad
\xymatrix{X^\vee \ar[r]^-{X^\vee \ten \eta} \ar[rd]_{=} & X^\vee\lten_A X \lten_B X^\vee 
\ar[d]^{\eps\ten X^\vee} \\
 & X^\vee.}
\]

As a consequence, the functor
\[
F = ?\lten_{A^e}(X\lten_k X^\vee) : D(A^e) \to D(B^e)
\]
is an equivalence with quasi-inverse $G=?\lten_{B^e} (X\lten_k X^\vee)$. 
We have canonical isomorphisms
\[
FA = A\lten_{A^e}(X\lten_k X^\vee) = X^\vee\lten_A A \lten_A X = X^\vee \lten_A X \iso B
\]
and
\[
GB= B \lten_{B^e}(X^\vee\lten_k X) = X \lten_B B \lten_B X^\vee = X \lten_B X^\vee \iso A.
\]
We obtain a canonical isomorphism
\begin{eqnarray*} 
HH^n(A,A)=\Hom_{D(A^e)}(A, A[n]) & \iso & \Hom_{D(B^e)}(X^\vee\lten_A X, X^\vee\lten_A X[n]) \\
& \iso & \Hom_{D(B^e)}(B, B[n]) =HH^n(B,B).
\end{eqnarray*}
By abuse of notation, we will still denote it by $f \mapsto Ff$.
Let
us suppose that $M$ is an $A$-bimodule such that
$N=FM$ is concentrated in degree $0$. For example, if $M=A$,
then $N=B$.

\begin{theo} There is a canonical isomorphism
\[
HH_\bullet(A,M) \iso HH_\bullet(B,N)
\]
such that for each $f\in HH^m(A,A)$ the following square commutes
\[
\xymatrix{ HH_n(A,M) \ar[rr]^{f\cap ?} \ar[d]_\cong & & HH_{n-m}(A,M) \ar[d]^\cong \\
HH_n(B,N) \ar[rr]_{Ff\cap?} & & HH_{n-m}(B,N).}
\]
\end{theo}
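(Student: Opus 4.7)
The plan is to reduce the commutativity of the square to the commutativity of a diagram in $D(k)$, then verify that diagram by unwinding the definitions. First, I would construct the Hochschild homology isomorphism explicitly. Setting $W = X \lten_k X^\vee$ so that $F(?) = ?\lten_{A^e} W$, the same calculation as in the excerpt gives $W \lten_{B^e} B = X \lten_B B \lten_B X^\vee \iso X \lten_B X^\vee$, so that
\[
N \lten_{B^e} B = (M \lten_{A^e} W) \lten_{B^e} B \iso M \lten_{A^e}(W \lten_{B^e} B) \iso M \lten_{A^e}(X \lten_B X^\vee) \xrightarrow{\id_M \lten \eta^{-1}} M \lten_{A^e} A
\]
in $D(k)$. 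Denote the composite isomorphism by $\mu$; passing to homology yields the required iso $HH_\bullet(B,N) \iso HH_\bullet(A,M)$.

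Next, I would reinterpret both cap products via the Lemma. Viewing $f$ as a morphism $A \to A[m]$ in $D(A^e)$ and $Ff$ as a morphism $B \to B[m]$ in $D(B^e)$, the Lemma identifies $f \cap ?$ with $H_{n-m}(\id_M \lten f)$ and $Ff \cap ?$ with $H_{n-m}(\id_N \lten Ff)$. It then suffices to prove that the diagram
\[
\xymatrix{ M \lten_{A^e} A \ar[rr]^-{\id_M \lten f} & & M \lten_{A^e} A[m] \\
 N \lten_{B^e} B \ar[u]^{\mu} \ar[rr]_-{\id_N \lten Ff} & & N \lten_{B^e} B[m] \ar[u]_{\mu[m]}}
\]
commutes in $D(k)$.

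To verify this, I would transport the bottom arrow through the intermediate identification of $\mu$: it becomes $\id_M \lten_{A^e}\bigl(\id_X \lten_B Ff \lten_B \id_{X^\vee}\bigr)$ on $M \lten_{A^e}(X \lten_B B \lten_B X^\vee)$. Unfolding the definition $Ff = \eps[m] \circ F(f) \circ \eps^{-1}$ with $F(f) = \id_{X^\vee}\lten_A f \lten_A \id_X$, a chase through the identifications $X \lten_B B \lten_B X^\vee \iso X \lten_B X^\vee$ and $X \lten_B (X^\vee \lten_A X) \lten_B X^\vee \iso (X \lten_B X^\vee) \lten_A (X \lten_B X^\vee)$, combined with the triangle identities for $\eta$ and $\eps$ from the excerpt, should show that $\id_X \lten_B Ff \lten_B \id_{X^\vee}$ equals $\eta[m] \circ f \circ \eta^{-1}$ as maps $X \lten_B X^\vee \to X \lten_B X^\vee[m]$. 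Naturality of $\lten_{A^e}$ in the second variable then closes the square.

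The main obstacle will be the final diagram chase: one must keep track of the two tensor products $\lten_A$ and $\lten_{A^e}$, apply the triangle identities to cancel matching $\eta\eta^{-1}$ and $\eps\eps^{-1}$ pairs, and verify that the result commutes with $\id_M \lten ?$. Apart from this (non-trivial but routine) verification, everything else is formal from the definitions of $F$, $\mu$, and the Lemma.
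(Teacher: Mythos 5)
Your proposal is correct and follows essentially the same route as the paper: the same isomorphism $M\lten_{A^e}A \iso N\lten_{B^e}B$, the same use of the Lemma to rewrite both cap products as $H_\bullet(\id\lten f)$ and $H_\bullet(\id\lten Ff)$, and the same reduction to a commuting square in the derived category. The only difference is in packaging the last step: where you verify $\id_X\lten_B Ff\lten_B\id_{X^\vee}=\eta[m]\circ f\circ\eta^{-1}$ directly from the triangle identities for $\eta$ and $\eps$, the paper expresses the same fact as naturality of the adjunction morphism $A\iso GFA$.
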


\begin{proof} 
We define the isomorphism
\[
HH_\bullet(A,M) \iso HH_\bullet(B,N)
\]
to be induced by the canonical chain of isomorphisms in $D(k)$
\[
M\lten_{A^e} A \iso M\lten_{A^e}(X\lten_B X^\vee) = M\lten_{A^e}(X\lten_k X^\vee) \lten_{B^e} B = FM \lten_{B^e} B = N \lten_{B^e} B.
\]
Let $f\in HH^m(A,A)$. It suffices to show that the following square is commutative
\[
\xymatrix{
M\lten_{A^e} A \ar[rr] \ar[d]_{M\ten f} & & M\lten_{A^e}(X\lten_k X^\vee)\lten_{B^e} B 
\ar[d]^{M\ten X \ten X^\vee \ten Ff}\\
M \lten_{A^e} A[m] \ar[rr] & & M\lten_{A^e}(X\lten_k X^\vee)\lten_{B^e} B[m] .
}
\]
This is implied by the commutativity of the square
\[
\xymatrix{
A \ar[d]_f \ar[rr] & & B \lten_{B^e}(X^\vee \lten_k X) \ar[d]^{(Ff)\ten X^\vee \ten X} \\
A[m] \ar[rr] & & B[m] \lten_{B^e} (X^\vee \lten_k X).
}
\]
In turn, this will follow from the commutativity of the square
\[
\xymatrix{
A \ar[d]_f \ar[rr] & & A \lten_{A^e}(X\lten_k X^\vee)\lten_{B^e}(X^\vee \lten_k X)
\ar[d]^{f\ten X \ten X^\vee \ten X^\vee \ten X}\\
A[m] \ar[rr] & & A[m] \lten_{A^e} (X\lten_k X^\vee)\lten_{B^e}(X^\vee \lten_k X).
}
\]
This last commutativity follows from the naturality of the adjunction
morphism $A \iso GFA$.
\end{proof}

\bibliographystyle{amsplain}
\bibliography{references}

\end{document}